\newtheorem{De}{Definition}[section]
\newtheorem{Th}[De]{Theorem}
\newtheorem{Pro}[De]{Proposition}
\DeclareMathOperator{\End}{End}
\DeclareMathOperator{\Ann}{Ann}
\begin{document}

\title{On Algebraic properties of ABO-Blood Type Inheritance Pattern}

\author{J.M. Casas$^1$, M. Ladra$^2$, B.A. Omirov$^3$, R. Turdibaev$^2$}
\address{$^1$Dpto. Matemática Aplicada I, Universidad de Vigo,  E. E. Forestal, Campus Universitario A Xunqueira, 36005 Pontevedra, Spain, jmcasas@uvigo.es}
\address{$^2$ Department of Algebra, University of Santiago de Compostela, 15782, Spain, manuel.ladra@usc.es, rustamtm@yahoo.com}
\address{$^3$Institute of Mathematics, National University of Uzbekistan, Tashkent, 100125, Uzbekistan, omirovb@mail.ru}

\thanks{Corresponding author: manuel.ladra@usc.es, phone: +34 881813138,  fax: +34 881813197}

\begin{abstract}
 We generate an algebra on blood phenotypes with multiplication based on human ABO-blood type inheritance pattern.
  We assume that during meiosis gametes are not chosen randomly. For this algebra we investigate its algebraic properties.
   Namely, the lattice of ideals and the associative enveloping algebra are described.
\end{abstract}

\subjclass[2010]{17D92, 17A20, 92D25}
\keywords{Human ABO-blood type inheritance, blood phenotype,  algebra, absolute nilpotent element, idempotents, ideal, lattice, associative algebra.}

\maketitle

\section{Introduction}

A blood type (also called a blood group) is a classification of blood based on the presence or absence of inherited antigenic
substances on the surface of red blood cells. These antigens may be proteins, carbohydrates, glycoproteins, or glycolipids,
 depending on the blood group system. Some of these antigens are also present on the surface of other types of cells
 of various tissues. Several of these red blood cell surface antigens can stem from one allele (or very closely linked genes)
  and collectively form a blood group system. Human blood group was discovered in 1900 by Karl Landsteiner \cite{Land}.
   Blood types are inherited and represent contributions from both parents.

Distinct molecules called agglutinogens (a type of antigen) are attached to the surface of red blood cells.
 There are two different types of agglutinogens, type ``A'' and type ``B''. Each type has different properties.
  The ABO blood type classification system uses the presence or absence of these molecules to categorize blood
   into four types and is the most important blood-group system in human-blood transfusion. The O in ABO is often called zero.

Establishing the genetics of the ABO blood group system was one of the first breakthroughs in Mendelian genetics.
 There are three alleles or versions of the ABO-blood type genes -  A, B and O.
  The allele O is recessive to A and B, and alleles A and B are co-dominant. It is known that humans are diploid organisms,
   which means that they carry a double set of chromosomes. Therefore blood types are determined by two alleles with six possible combinations:
    AA, BB, OO, AB, OA and OB. Since A and B dominate over O, the genotypes AO and AA express blood type A (phenotype A)
     and BO together with BB correspond to phenotype B.

A number of papers were devoted to study of distribution of blood types in different countries and ethnicities \cite{Chak,Fujita,Kang}.

Some methods for estimating phenotype probabilities for ABO groups were developed and compared in \cite{Sina}. Assuming allele probabilities  to be $p, q$ and $r$ for
the genes A, B, and O, respectively, they obtain some estimates on the probabilities that a  person  has a corresponding phenotype.

Algebraic relations that blood group frequencies satisfy were given in \cite{BernUber}. In \cite{Timur}  a question
 on how the frequencies of human blood genotypes will evolve after several generations in a population has been considered
 and a complete list of all algebraic relations between blood genotypes frequencies is established, as well.

The evolution (or dynamics) of a population comprises a determined change of state in the next generations as a result of reproduction and selection.
 This evolution of a population can be studied by a dynamical system (iterations) of a quadratic stochastic (a so-called evolutionary) operator \cite{Bernstein}.

Most of the numerous papers on the subject were dedicated to cases when during the fertilization parents' gametes are chosen random
and in an independent way. Using Mendel's first law allows to quantify the types of gametes an individual can produce.
 For example, a person with genotype $OA$ during meiosis produces gametes $O$ and $A$ with equal probability $\frac12$.

Throughout the article we assume that child obtains exactly one allele from each parent. However, in \cite{Yamaguchi} and \cite{Yamaguchi2}
 some instances when two alleles $A$ and $B$ were inherited from one parent were described. Therefore, generally speaking pattern of heredity
  of blood types are unpredictable. Using specific models of heredity and collected data that contains these mutations in \cite{Gani} a limiting distribution of blood group is studied.

In this paper we will consider the case when we deviate from Mendelian rules and let some competition for gametes during meioses.
 We will assume that all parents of blood type $A$ and $B$ contribute with gamete $O$ to the child with a constant probability $\alpha$
  and $A$ allele is selected with probability $\beta$ from parents of type $AB$ during meiosis. In case of Mendelian genetics,
   $\beta=\frac12$ and $\alpha=\frac14$. Further, considering the blood phenotypes as independent basis vectors,
     we generate a 4-dimensional vector space over $\mathbb{R}$ and introduce a commutative and non-associative multiplication
      assigning to basis vectors a linear combination of the possible phenotypes of progeny with corresponding probabilities.

After applying some linear basis transformations we obtain an algebra which admits a simpler table of multiplication and
 describe the lattice of ideals of this algebra, and observe that it changes depending on the values of initial parameters.
  Finally, we establish that two distinct such algebras are not isomorphic unless the second parameters of these algebras add up to 1.

\section{Algebras of ABO-Blood type}

Consider blood phenotypes $O, A, B$ and $AB$ as basis elements of a 4-dimensional vector space and a bilinear operation  $\circ$ as the result of meiosis.

In this work we will assume that all parents of blood phenotype $A$ and $B$ have equal probabilities to contribute with allele $O$
 for the child's genotype and we will denote this probability by $p_{O|A}=p_{O|B}=\alpha$. Furthermore, we assume that all parents
  with phenotype (genotype) $AB$ admit equal probabilities to contribute with allele $A$ during meiosis and we denote
   it by $p_{A|AB}=\beta$. Under these assumptions we have the following 10 formal equalities:

\begin{itemize}
\item[(i)] $O\circ O = O$;

\item[(ii)] $O\circ A = p_{O|A} O + (1-p_{O|A})A=\alpha O+(1-\alpha)A$;

\item[(iii)] $O\circ B = p_{O|B} O + (1-p_{O|B})B= \alpha O+(1-\alpha)B$;

\item[(iv)] $O\circ AB = p_{A|AB}A+p_{B|AB}B=\beta A+(1-\beta)B$;

\item[(v)] $A\circ A =p_{O|A}^2 O+ (1-p_{O|A}^2)A=\alpha^2 O + (1-\alpha^2) A$;

\item[(vi)] $\begin{aligned}[t]
A\circ B &= p_{O|A}p_{O|B}O+p_{A|A}p_{O|B}A+p_{O|A}p_{B|B}B+p_{A|A}p_{B|B}AB\\
& = \alpha^2 O+\alpha(1-\alpha)A+\alpha(1-\alpha)B+(1-\alpha)^2AB;
\end{aligned}$

\item[(vii)] $\begin{aligned}[t]
A\circ AB &= p_{A|AB}A +p_{O|A} p_{B|AB}B + p_{A|A}p_{B|AB}AB\\
& = \beta A+\alpha(1-\beta)B+(1-\alpha)(1-\beta)AB;
\end{aligned}$

\item[(viii)] $B\circ B = p_{O|B} ^2 O+ (1-p_{O|B}^2)B=\alpha^2 O+(1-\alpha^2)B$;

\item[(ix)]  $\begin{aligned}[t]
B\circ AB & =p_{O|B} p_{A|AB}A +p_{B|AB}B + p_{B|B}p_{A|AB}AB\\
& = \alpha\beta A+(1-\beta)B+(1-\alpha)\beta AB;
\end{aligned}$

\item[(x)]  $\begin{aligned}[t]
AB\circ AB & =p_{A|AB}^2A+p_{B|AB}^2B+2p_{A|AB}p_{B|AB}AB\\
& = \beta^2 A+(1-\beta)^2B+2\beta(1-\beta)AB.
\end{aligned}$
\end{itemize}

\begin{De} A commutative four-dimensional $\mathbb{R}$-algebra   with basis   $\{O,A,B,AB\}$
 and with multiplication $\circ$ satisfying equalities {\rm(i)--(x)} is called a generalized ABO-blood type algebra (GBTA) and is denoted by $\mathcal{B}(\alpha,\beta)$.
\end{De}

If during meiosis we assume that parents' gametes are chosen random and independently, then $\alpha=\frac14$ and $\beta=\frac12$.

\begin{De}
A generalized ABO-blood type algebra $\mathcal{B}(\frac14,\frac12)$ is called an ABO-blood type algebra (BTA,  short form).
\end{De}

From now on we assume that $0<\alpha, \beta <1$. Note that if we interchange $A$ and $B$ and $\beta$ to $1-\beta = 1-p_{A|AB}=p_{B|AB}$,
 we obtain the same products as above, i.e., we have $\mathcal{B}(\alpha, \beta)\cong \mathcal{B}(\alpha, 1-\beta)$.
  Later in the last section we establish that no other isomorphisms between two GBTAs exist for different values of parameters $\alpha$ and $\beta$.

We can have a look to the algebraic relations defining a GBTA from a different perspective.

Let $x_1,x_2,x_3,x_4$ be corresponding proportions of $O, A, B, AB$ phenotypes in one population. Then for the underlying allele frequencies we have the following equalities
\[p_O= x_1+\alpha x_2+\alpha x_3, \, p_A= (1-\alpha)x_2+\beta x_4,\, p_B= (1-\alpha)x_3+(1-\beta)x_4.\]

Straightforward computation of the frequencies of $O, A, B$ and $AB$ phenotypes in zygotes of the next state yield an extension of Hardy-Weinberg Law:
\[ \left\{ \begin{aligned}
x_1' &=p_O^2\\
x_2' & =p_A^2+2p_A p_O\\
x_3' & =  p_B^2+2p_Bp_O\\
x_4' &=2p_Ap_B.
\end{aligned} \right. \]

Consider $\mathbb{S}^3=\{\mathbf{x}=(x_1,x_2,x_3,x_4)\in \mathbb{R}^4 \mid x_1+x_2+x_3+x_4=1,  x_i \geq 0,\, 1\leq i \leq 4\}$
a 3-dimensional canonical simplex. Following \cite{Lyubich}, we have a so-called evolutionary (quadratic stochastic) operator
 $V \colon \mathbb{S}^3\to \mathbb{S}^3$  describing an evolution of the population mapping a state $\mathbf{x}=(x_1,x_2,x_3,x_4)$
 to the next state $V(\mathbf{x})=(x_1',x_2',x_3',x_4')$. By linearity $V$ is extended to $\mathbb{R}^4$ if it is necessary.

The relation that establishes a connection between evolutionary operator $V$ and multiplication $\circ$ of a GBTA is
 $\mathbf{x}\circ \mathbf{x} = V(\mathbf{x})$ and consequently \[\mathbf{x}\circ \mathbf{y} = \frac{1}{4}(V(\mathbf{x+y})-V(\mathbf{x-y})).\]

In order to simplify our investigation of the structure of  a GBTA we make the following linear basis transformation
\[\left\{ \begin{aligned}
o&=O\\
a&= \frac{1}{(1-\alpha)^2}\cdot (O-A)\\
b&= \frac{1}{(1-\alpha)^2}\cdot (O-B)\\
ab&= \frac{1}{(1-\alpha)^3}\cdot (\alpha O-\beta A - (1-\beta)B+(1-\alpha)AB)
\end{aligned} \right.
\]
and obtain a simpler table of multiplication of  a GBTA:

\[\mathcal{B'}(\lambda, \beta): \left\{ \begin{aligned}
{o\circ o}&=o\\
{o\circ a}&= \lambda a\\
{o \circ b}&= \lambda b\\
{a \circ a}&= a\\
{b \circ b}&= b\\
{a \circ b}&= \frac{\lambda-\beta}{\lambda}\cdot a + \frac{\lambda-(1-\beta)}{\lambda} \cdot b + ab,
\end{aligned} \right.
\]

where $\lambda=1-\alpha$.

Due to convenience of the above products further we will investigate algebraic properties of the algebra $\mathcal{B'}(\lambda, \beta)$.

\section{Absolute nilpotent and idempotent elements}

Taking an initial point $\mathbf{x}\in \mathbb{S}^3$ one can consider its trajectory $\{V^k(\mathbf{x})  \mid k\geq 1\}$.
 The study of limit behavior of trajectories of quadratic stochastic operators played an important role in several questions of population genetics.
  Trajectories of genotype frequencies were studied in \cite{Jamilov,Lyubich2}.
   Note that, if a limit point of a trajectory exists then it is a fixed point, that is $\mathbf{x}=V(\mathbf{x})=\mathbf{x}\circ \mathbf{x}$.

\begin{De}
An element $x$ of an algebra $(A, \circ)$ with $x \circ x =\mu x$ is said to be absolute nilpotent if $\mu=0$ and idempotent if $\mu =1$.
\end{De}

We see that set of absolute nilpotent elements of a GBTA constitutes the kernel of $V$ and idempotent elements are fixed points of $V$.

It is easy to see that $ab$ is annihilated in the  algebra $\mathcal{B'}(\lambda, \beta)$ and it is absolute nilpotent element, while $o,\, a $ and $b$ are idempotent elements.

\begin{Th} \label{absolutely nilpotent}
The set of absolute nilpotent elements of $\mathcal{B'}(\lambda, \beta)$ is $\langle ab\rangle$.
\end{Th}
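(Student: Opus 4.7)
The plan is to parametrize a general element of $\mathcal{B'}(\lambda,\beta)$ as $x = x_0 o + x_1 a + x_2 b + x_3 ab$, expand the product $x\circ x$ using the six nontrivial entries of the multiplication table, and then extract the four coordinate equations that $x\circ x = 0$ imposes. The crucial first observation, noted in the paragraph preceding the theorem, is that $ab$ lies in the annihilator of the algebra: every product in which $ab$ appears vanishes. Consequently $x_3$ does not enter $x\circ x$ at all, and the whole calculation collapses onto the subspace spanned by $\{o,a,b\}$.

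Next I would read off the coefficients of $x\circ x$ in the basis $\{o,a,b,ab\}$. The coefficient of $o$ comes solely from $o\circ o$ and equals $x_0^2$, while the coefficient of $ab$ comes solely from $a\circ b$ and equals $2x_1 x_2$. Setting these to zero immediately yields $x_0 = 0$ and $x_1 x_2 = 0$. With these two relations in hand, the coefficient of $a$ (respectively $b$) reduces to $x_1^2$ (respectively $x_2^2$), because the $2\lambda x_0 x_1$ and $2\lambda x_0 x_2$ contributions die with $x_0 = 0$, and the terms carrying $\frac{\lambda-\beta}{\lambda}$ or $\frac{\lambda-(1-\beta)}{\lambda}$ carry the vanishing factor $x_1 x_2$. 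Hence $x_1 = x_2 = 0$, the parameter $x_3$ remains unconstrained, and $x = x_3\,ab \in \langle ab\rangle$. Conversely every such element is absolute nilpotent because $ab$ annihilates the whole algebra.

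The argument is essentially careful bookkeeping, so there is no genuine obstacle. The one point that deserves attention is the order of extraction: reading the $ab$-coordinate \emph{before} the $a$- and $b$-coordinates is what makes the potentially awkward $\beta$-dependent coefficients in the latter two equations disappear. This is also what guarantees that the conclusion holds uniformly for all admissible $\lambda,\beta$, with no case splitting on whether $\lambda - \beta$ or $\lambda - (1-\beta)$ happens to vanish.
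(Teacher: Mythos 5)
Your proposal is correct and follows essentially the same route as the paper: kill the $o$-component by reading off the $o$-coordinate of $x\circ x$, then solve the system $x_1^2+\tfrac{2x_1x_2}{\lambda}(\lambda-\beta)=0$, $x_2^2+\tfrac{2x_1x_2}{\lambda}(\lambda+\beta-1)=0$, $2x_1x_2=0$, whose only solutions are $(0,0,x_3)$. Your remark about using the $ab$-coordinate first to eliminate the $\beta$-dependent terms is exactly how the paper's ``obvious'' solution of that system is obtained, so the two arguments coincide.
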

\begin{proof}
 If $\alpha o +n$, where $n$ belongs to the ideal $\langle a, b, ab\rangle$, is an absolute nilpotent element, then
$(\alpha o +n)^2=\alpha^2 o + 2\alpha o \circ n + q\circ n \equiv \alpha^2 o  \ (\bmod \ \langle a, b, ab\rangle)$ implies  $\alpha=0$.

Let $n=xa+yb+zab$ be an absolute nilpotent. Then
\begin{align*}
n\circ n &= x^2a+y^2b+2xy a\circ b\\
& = \left(x^2+\frac{2xy}{\lambda}(\lambda-\beta)\right)a +\left(y^2+\frac{2xy}{\lambda}(\lambda+\beta-1)\right)b+2xyab=0.
\end{align*}

Hence, we need to solve the system of equations
\[
\left\{ \begin{aligned}
 0&= x^2+ \frac{2xy}{\lambda}(\lambda-\beta)\\
0&= y^2+ \frac{2xy}{\lambda}(\lambda+\beta-1) \\
0&=2xy
\end{aligned}  \right.
\]
Solution is obviously any triple $(0,0,z)$.
\end{proof}

Now we describe the idempotents of $\mathcal{B'}(\lambda, \beta)$.

Let us denote by $P:=\{(\lambda,\beta)  \mid 0<\lambda
\leq \frac13,\, \beta = \frac12\left( 1\pm \sqrt{(1-\lambda)(1-3\lambda)} \right) \}$.

\begin{Th} \label{idempotent}
	For the algebra $\mathcal{B'}(\lambda,\beta)$, with  $(\lambda,\beta) \in P$, the set of idempotents is
\[\{o , \ a, \  b, \ o +(1-2\lambda)a, \ o +(1-2\lambda)b\}.\]
	
	For the algebra $\mathcal{B'}(\lambda,\beta)$, with  $(\lambda,\beta) \not\in P$, the set of idempotents is
\[\{o , \ a, \  b,  \ o +(1-2\lambda)a, \  o +(1-2\lambda)b, \ j_0, \ j_1\},\]
	where
\begin{multline*} j_{\alpha}= \alpha o +\frac{\lambda (1-2\alpha \lambda)}{-3\lambda^2+4\beta^2+4\lambda-4\beta} (2\beta-\lambda)a+
 \frac{\lambda (1-2\alpha \lambda)}{-3\lambda^2+4\beta^2+4\lambda-4\beta} (2-2\beta-\lambda)b\\
  +2 \left(\frac{\lambda(1-2\alpha \lambda)}{-3\lambda^2+4\beta^2+4\lambda-4\beta}\right)^2 (2\beta-\lambda) (2-2\beta-\lambda)ab,
  \quad \alpha=0, 1.
 \end{multline*}
\end{Th}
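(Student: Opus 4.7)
The plan is to substitute a generic element $x=\gamma o+x_{1}a+x_{2}b+x_{3}ab$ into the equation $x\circ x=x$ and project onto each basis vector. Using the multiplication table of $\mathcal{B}'(\lambda,\beta)$, together with the fact (noted just before Theorem~\ref{absolutely nilpotent}) that $ab$ lies in the annihilator, the expansion separates cleanly. The $o$-component yields $\gamma^{2}=\gamma$, so $\gamma\in\{0,1\}$; the $ab$-component yields $x_{3}=2x_{1}x_{2}$; and the $a$- and $b$-components factor as
\[
x_{1}\Bigl[x_{1}+\tfrac{2(\lambda-\beta)}{\lambda}x_{2}-(1-2\gamma\lambda)\Bigr]=0,\qquad
x_{2}\Bigl[x_{2}+\tfrac{2(\lambda-1+\beta)}{\lambda}x_{1}-(1-2\gamma\lambda)\Bigr]=0.
\]

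I would then case split on whether $x_{1}$ and $x_{2}$ vanish. If both vanish, one recovers the trivial $0$ together with the idempotent $o$. If exactly one vanishes, the remaining coordinate is forced to equal $1-2\gamma\lambda$, which for $\gamma=0,1$ yields the four elements $a$, $b$, $o+(1-2\lambda)a$, $o+(1-2\lambda)b$; combined with $o$ these are the first five idempotents listed. The generic case requires both brackets to vanish, giving the linear system
\[
x_{1}+p\,x_{2}=c,\qquad q\,x_{1}+x_{2}=c,
\]
with $p=\tfrac{2(\lambda-\beta)}{\lambda}$, $q=\tfrac{2(\lambda-1+\beta)}{\lambda}$, and $c=1-2\gamma\lambda$.

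The coefficient matrix has determinant $1-pq=\bigl(-3\lambda^{2}+4\lambda+4\beta^{2}-4\beta\bigr)/\lambda^{2}$. When this does not vanish, Cramer's rule gives a unique solution for each $\gamma\in\{0,1\}$, and feeding $x_{3}=2x_{1}x_{2}$ back in reproduces exactly the formulas for $j_{0}$ and $j_{1}$ stated in the theorem. The vanishing locus of the determinant is the quadratic $4\beta^{2}-4\beta-(3\lambda^{2}-4\lambda)=0$, whose roots are $\beta=\tfrac{1}{2}\bigl(1\pm\sqrt{(1-\lambda)(1-3\lambda)}\bigr)$, real precisely when $\lambda\leq\tfrac{1}{3}$, i.e.\ exactly on the set $P$.

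The step I expect to be the main obstacle is verifying that \emph{no} additional idempotents appear when $(\lambda,\beta)\in P$. There the matrix is rank one with second row equal to $q$ times the first, so consistency of the non-homogeneous system demands $c(1-q)=0$. I would check that both factors are nonzero on $P$ for $\gamma\in\{0,1\}$: the bound $\lambda\leq\tfrac{1}{3}$ gives $c=1-2\gamma\lambda\geq\tfrac{1}{3}>0$, while $q=1$ would force $\beta=1-\lambda/2$, and substitution into the defining equation of $P$ reduces after clearing the square root to $\lambda=0$, contradicting $0<\lambda$. Hence this case contributes nothing on $P$, and the two lists in the theorem exhaust all idempotents.
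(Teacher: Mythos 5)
Your proposal is correct and follows essentially the same route as the paper: reduce $x\circ x=x$ to $\gamma\in\{0,1\}$, $x_3=2x_1x_2$, and the two factored equations, split on the vanishing of $x_1,x_2$, and in the generic case analyze the $2\times 2$ linear system according to whether its determinant vanishes, which is exactly the condition $(\lambda,\beta)\in P$. Your final consistency check on $P$ (showing $c=1-2\gamma\lambda\neq 0$ and that $q=1$ forces $\lambda=0$) just spells out what the paper dismisses as ``simple observations,'' so there is nothing to add.
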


\begin{proof}
  Similarly as in the proof of Theorem~\ref{absolutely nilpotent}, we deduce that an idempotent admits the form $i=\alpha o  + xa+yb+zab$ with $\alpha=\alpha^2$.

Equality $i\circ i =i $ yields the system of equations
\[ \left\{ \begin{array}{lcl}
x&=& x^2+ \frac{2xy}{\lambda}(\lambda-\beta)+2\alpha\lambda x \\
y&=& y^2+ \frac{2xy}{\lambda}(\lambda+\beta-1)+2\alpha\lambda y\\
z&=& 2xy.
\end{array} \right.  \]

The first two equations transform into
\[\left\{\begin{array}{lcl}
0&=& x\left(x+ 2y\frac{\lambda-\beta}{\lambda}+(2\alpha\lambda-1)\right) \\
0&=& y\left(y+ 2x\frac{\lambda+\beta-1}{\lambda}+(2\alpha\lambda-1) \right).
\end{array} \right.  \]

If $x=0$, then $z=0$ and we obtain $y(y+(2\alpha \lambda -1))=0$. Further, either $y=0$ or $y=1-2\alpha\lambda$.
This yields two idempotents  $\alpha o $ and $\alpha o +(1-2\alpha\lambda)b$.
Taking into account the possible values for $\alpha$ we conclude that $o ,b, o +(1-2\lambda)b$ are idempotents.

If $y=0$, then $z=0$ and we obtain $x(x+ 2y\frac{\lambda-\beta}{\lambda}+(2\alpha\lambda-1))=0$.
Further, either $x=0$ or $x=1-2\alpha\lambda$. This yields  the idempotents $\alpha o $ and $\alpha o +(1-2\alpha\lambda)a$.
Since $\alpha =0 ,\, 1$ we get that $a$ and $o +(1-2\lambda)a$ are idempotents.

Now we consider the case when $xy\neq 0$. We obtain
\begin{equation}\label{E:ec1}
\left\{ \begin{aligned}
x+ 2 \frac{\lambda-\beta}{\lambda}y &=  1-2\alpha\lambda\\
2 \frac{\lambda+\beta-1}{\lambda}x+y &= 1-2\alpha\lambda.
\end{aligned} \right.
\end{equation}

In order to solve the system we consider the following two cases:

\textbf{Case 1.} Let $\displaystyle \det \begin{pmatrix}
1 & 2 \frac{\lambda-\beta}{\lambda}\\
2 \frac{\lambda+\beta-1}{\lambda} & 1 \\
\end{pmatrix} \neq 0$, i.e., $1-\frac{4}{\lambda^2} (\lambda-\beta)(\lambda+\beta-1)\neq 0$. Then we  have the unique solution of \eqref{E:ec1}
\begin{align*}
x&=\frac{\lambda (1-2\alpha \lambda)}{-3\lambda^2+4\beta^2+4\lambda-4\beta} (2\beta-\lambda),\\
y&=\frac{\lambda (1-2\alpha \lambda)}{-3\lambda^2+4\beta^2+4\lambda-4\beta} (2-2\beta-\lambda).
\end{align*}
Hence $z=2xy=2 \displaystyle \left(\frac{\lambda (1-2\alpha \lambda)}{-3\lambda^2+4\beta^2+4\lambda-4\beta}\right)^2 (2\beta-\lambda)
 (2-2\beta-\lambda)$. Thus  the desired idempotents are  $\alpha o  +x a + yb +z ab$, where $\alpha=0, \, 1$.
\medskip

\textbf{Case 2.} Let $\displaystyle\det\left( \begin{array}{cc}
1 & 2 \frac{\lambda-\beta}{\lambda}\\
2 \frac{\lambda+\beta-1}{\lambda} & 1 \\
\end{array}\right) =0$. This condition is equivalent to $(1-3\lambda)(1-\lambda)=(2\beta-1)^2$. For this condition to hold it
 is necessary and sufficient that $0<\lambda \leq \frac13$ and $\beta = \frac12\left(1\pm\sqrt{(3\lambda-1)(\lambda-1)}\right)$.
  So the determinant is zero if and only if $(\lambda, \beta ) \in P$.

It is easy to see that one obtains the first equality by multiplying the second one to $2\cdot \frac{\lambda-\beta}{\lambda}$.
 Thus we have  $2\cdot \frac{\lambda-\beta}{\lambda}(1-2\alpha \lambda)=1-2\alpha \lambda$,
   consequently  either $2\cdot \frac{\lambda-\beta}{\lambda}=1$ or $1-2\alpha \lambda=0$. Simple observations derive to contradiction with $(\lambda, \beta ) \in P$.
\end{proof}

\section{Plenary powers}
In this section we investigate which states $\mathbf{x}\in \mathbb{R}^4$ will admit zero as a limit point after a finite number of iterations,
 i.e., $V^k(\mathbf{x})=0$ for some $k\geq 1$.  Recall from the previous section that  the kernel of $V$  consists of the absolute nilpotent elements.

\begin{De}
For an arbitrary  element $m$ in $\mathcal{B'}(\lambda, \beta)$, its so called  plenary powers are defined recursively by
\[m^{[1]}=m, \qquad m^{[n+1]}=m^{[n]}\circ m^{[n]}, \quad n \geq 1.\]

An element $m$ in $\mathcal{B'}(\lambda, \beta)$ is said to be solvable if there exists $n \in \mathbb{N}$ such that  $m^{[n]}=0$ and the least such number $n$,  its solvability index.
\end{De}

Obviously, absolute nilpotent elements are solvable with index of solvability equal to 2.

\begin{Th}
	For an algebra of ABO-blood type $\mathcal{B}'(\lambda, \beta)$ to admit a solvable element of index $n\geq 3$ it is necessary and sufficient that $(\lambda,\beta)\in P$.
	
	Moreover, solvable elements of degree $n$ are
	\[-2^{n-4}\left( \frac{\lambda+\beta-1}{\lambda}\right)^{n-4} t  a +t  b + s  ab, \ \text{ where } \ t,s \in \mathbb{R}, \ t\neq 0.\]
\end{Th}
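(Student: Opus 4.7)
The plan is to reduce the problem to a two-dimensional quadratic dynamical system. First, if $m = \alpha o + xa + yb + zab$, then the $o$-component of $m\circ m$ is $\alpha^2 o$, so $m^{[k]}$ has $o$-coefficient $\alpha^{2^{k-1}}$, forcing $\alpha = 0$ for any solvable element. Writing $m^{[k]} = X_k\,a + Y_k\,b + Z_k\,ab$ and setting $\mu := (\lambda-\beta)/\lambda$, $\nu := (\lambda+\beta-1)/\lambda$, the multiplication of $\mathcal{B}'(\lambda,\beta)$ gives
\[
X_{k+1} = X_k(X_k + 2\mu Y_k), \qquad Y_{k+1} = Y_k(Y_k + 2\nu X_k), \qquad Z_{k+1} = 2 X_k Y_k.
\]
Since $(X_k, Y_k)$ evolves independently of $Z_k$, the essential dynamics lives in $\mathbb{R}^2$ under the map $G(x,y) = (x(x+2\mu y),\, y(y+2\nu x))$.

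For the \emph{only if} direction, the identity $Z_n = 2 X_{n-1} Y_{n-1}$ together with the formulas for $X_n, Y_n$ quickly shows that $m^{[n]} = 0$ is equivalent to $X_{n-1} = Y_{n-1} = 0$. Solvability index equal to $n \geq 3$ further requires $Z_{n-1} \neq 0$, i.e.\ $X_{n-2}\,Y_{n-2} \neq 0$. Hence $(X_{n-2}, Y_{n-2})$ is a zero of $G$ with both coordinates nonzero, which forces the homogeneous linear system $X_{n-2} + 2\mu Y_{n-2} = 0$ and $Y_{n-2} + 2\nu X_{n-2} = 0$ to have a nontrivial solution; equivalently $1 - 4\mu\nu = 0$, that is, $(1 - 3\lambda)(1 - \lambda) = (2\beta - 1)^2$ with $0 < \lambda \leq 1/3$, which as in Case~2 of the proof of Theorem~\ref{idempotent} is the definition of $P$.

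For the \emph{if} direction, assume $4\mu\nu = 1$ and seek index-$n$ elements. A propagation-of-zeros argument shows $X_k \neq 0 \neq Y_k$ for $1 \leq k \leq n - 2$: if either coordinate vanishes at some step, the recursion forces it to vanish at all later steps, contradicting $X_{n-2}\,Y_{n-2} \neq 0$. Hence the ratio $c_k := X_k/Y_k$ is defined and satisfies
\[
c_{k+1} = \frac{c_k(c_k + 2\mu)}{1 + 2\nu c_k}.
\]
Using $4\mu\nu = 1$ to rewrite $c_k + 2\mu = (1 + 2\nu c_k)/(2\nu)$, this rational recursion collapses to the \emph{linear} one $c_{k+1} = c_k/(2\nu)$, so $c_k = c_1 (2\nu)^{-(k-1)}$. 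Imposing the terminal condition $c_{n-2} = -2\mu = -1/(2\nu)$ from the previous paragraph forces $c_1 = -(2\nu)^{n-4}$; substituting $\nu = (\lambda + \beta - 1)/\lambda$ and setting $Y_1 = t$, $Z_1 = s$ recovers the displayed family.

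The delicate step I expect will be confirming that the constructed $m$ has solvability index \emph{exactly} $n$ rather than something smaller, i.e.\ that the iterate $(X_k, Y_k)$ does not hit the ``collapse line'' $\{X + 2\mu Y = 0\}$ prematurely. Using the closed form $c_k = -(2\nu)^{n-3-k}$, this amounts to $(2\nu)^j \neq 1$ for $1 \leq j \leq n-3$, which is automatic whenever $2\nu$ is not a real root of unity---that is, on the generic part of $P$; the only point meriting a separate check is the corner where $2\nu = -1$.
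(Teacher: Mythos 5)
Your proposal follows essentially the same route as the paper: eliminate the $o$-component, pass to the recursion $X_{k+1}=X_k(X_k+2\mu Y_k)$, $Y_{k+1}=Y_k(Y_k+2\nu X_k)$ modulo $\langle ab\rangle$, force $X_{n-1}=Y_{n-1}=0$ with $X_{n-2}Y_{n-2}\neq 0$ to get the vanishing determinant $1-4\mu\nu=0$ (i.e.\ $(\lambda,\beta)\in P$), and then telescope the ratio $c_k=X_k/Y_k$ via $c_{k+1}=c_k/(2\nu)$ down to the terminal value $-2\mu$; this is exactly the paper's argument in slightly cleaner notation. The one place you diverge is your final paragraph, and your instinct there is correct and worth emphasizing: the paper only derives \emph{necessary} conditions on $(X_1,Y_1,Z_1)$ and never checks that the resulting element has index exactly $n$, i.e.\ that the orbit does not hit the collapse line $\{X+2\mu Y=0\}$ before step $n-2$. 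On $P$ one can check that $2\nu=1$ never occurs, but $2\nu=-1$ does occur at the corner $(\lambda,\beta)=(\tfrac13,\tfrac12)$, and there the claim genuinely fails: for $n=5$ the displayed formula gives $m=ta+tb+sab$, and since $a\circ b=-\tfrac12 a-\tfrac12 b+ab$ at this point, $m^{[2]}=2t^2\,ab$ and $m^{[3]}=0$, so $m$ has index $3$, not $5$; similarly no elements of index $n\geq 5$ exist at this parameter value. So the ``separate check'' you defer is not a formality but exposes a gap in the theorem as stated (and in the paper's own proof); away from that single corner your argument, with the observation that $2\nu$ is not a root of unity, does complete the sufficiency direction.
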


\begin{proof}
Let $m$ be a solvable element  in $\mathcal{B'}(\lambda, \beta)$ with solvability index $n$.
  Similar as in the proof of Theorem~\ref{absolutely nilpotent} we can assume that a solvable element does not contain $o$ component.

Set $m^{[k]}=X_ka+Y_k b + Z_k ab$, for all $1\leq k \leq n$.

In order to obtain recursive relations between the pairs  $(X_{k+1},Y_{k+1})$ and $(X_{k},Y_{k})$, for $1\leq k \leq n-1$,
  we consider  the following equivalences modulo the ideal $\langle ab\rangle$:
\[\begin{array}{rl}
X_{k+1}a+Y_{k+1}b  \equiv & m ^{[k+1]}= m ^{[k]}\circ m ^{[k]}\\
\equiv& (X_{k}a+Y_{k}b)\circ (X_{k}a+Y_{k}b)\\
\equiv & X_{k}\left(X_{k}+2 \frac{\lambda-\beta}{\lambda}Y_{k}\right) a + Y_{k}\left( 2 \frac{\lambda+\beta-1}{\lambda}X_{k}+Y_{k}\right)b. \end{array}\]
Therefore we obtain a system of equations
\[S_{k}: \left\{\begin{array}{lcl}
X_{k+1}&=& X_{k}\left(X_{k}+ 2\frac{\lambda-\beta}{\lambda}Y_{k}\right) \\
Y_{k+1}&=& Y_{k}\left(2\frac{\lambda+\beta-1}{\lambda}X_{k}+Y_{k}\right).
\end{array} \right.\]

Since we have described all absolute nilpotent elements, we assume $n\geq 3$.
Due to $m ^{[n-1]}$ is an absolute nilpotent element, we know that $m^{[n-1]} \equiv 0 \ (\bmod \ \langle ab \rangle)$, so $X_{n-1}=Y_{n-1}=0$.
 Therefore, the system $S_{n-2}$ has the form
\[\left\{\begin{array}{lcl}
0&=& X_{n-2}\left(X_{n-2}+ 2\frac{\lambda-\beta}{\lambda}Y_{n-2}\right) \\
0&=& Y_{n-2}\left(2\frac{\lambda+\beta-1}{\lambda}X_{n-2}+Y_{n-2}\right).
\end{array} \right. \]

Obviously, either $X_{n-2}Y_{n-2}\neq 0$ or  $X_{n-2}=Y_{n-2}=0$. But in the last case $m^{[n-2]}=Z_{n-2}ab$ and
 $m ^{[n-1]}=0$ which is a contradiction. Therefore,  $X_{n-2}Y_{n-2}\neq 0$ and we have the linear system of equations:

\[\left\{\begin{array}{lcl}
0&=& X_{n-2}+ 2\frac{\lambda-\beta}{\lambda}Y_{n-2}\\
0&=& 2\frac{\lambda+\beta-1}{\lambda}X_{n-2}+Y_{n-2}.
\end{array} \right.\]

If the determinant of this system is not zero then  we obtain a trivial solution which contradicts our assumption.
 Therefore, the determinant is zero, i.e., $ (\lambda, \beta)\in P$ and we get a solution
 $(X_{n-2},Y_{n-2})=(-2\cdot\frac{\lambda-\beta}{\lambda}t, t)$ for some $t\in \mathbb{R}^*$.

Let $n=3$. Then an element $m=X_1a+Y_1b+Z_1ab$ is solvable if and only if the following holds:
\begin{enumerate}
\item[1.] $(\lambda, \beta)\in P$.

\item[2.] $\displaystyle(X_1,Y_1,Z_1)=(-2 \frac{\lambda-\beta}{\lambda} t, t, s)$ for free parameters $t$ and $s$, where $t\neq 0$.
\end{enumerate}

Now let us assume $n\geq 4$.

Using singularity of the determinant we can transform the last equality in the system $S_{k}$ to the following form

\[\left\{\begin{array}{lcr}
 X_{k+1}&=& X_{k}\left(X_{k}+ 2\frac{\lambda-\beta}{\lambda}Y_{k}\right) \\
 Y_{k+1}&=& 2\frac{\lambda+\beta-1}{\lambda}Y_{k}\left(X_{k}+2\frac{\lambda-\beta}{\lambda}Y_{k}\right)
 \end{array} \right.\] for any $1\leq k \leq n-1$.

 Recall that we are in the case $X_{k}Y_{k}\neq 0$ for $1\leq k \leq n-2$ (otherwise, the index of solvability is less than $n$).

 Since both sides of each one of the equalities in the above system are assumed to be non-zero for $1\leq k \leq n-3$, we obtain
 \[\frac{X_{k}}{Y_{k}}=2\cdot \frac{\lambda+\beta-1}{\lambda}\cdot \frac{X_{k+1}}{Y_{k+1}},\textrm{ for any } 1\leq k \leq n-3.\]

 Therefore,
 \[\frac{X_1}{Y_1}=\left(2 \frac{\lambda+\beta-1}{\lambda}\right)^{n-3} \frac{X_{n-2}}{Y_{n-2}}
 =\left(2 \frac{\lambda+\beta-1}{\lambda}\right)^{n-3} \left(-2 \frac{\lambda-\beta}{\lambda}\right)=-\left(2 \frac{\lambda+\beta-1}{\lambda}\right)^{n-4}.\]

Hence, for an element $m=Xa+Yb+Zab$ to be solvable with index of solvability $n\geq 4$ it is necessary and sufficient the following  to hold:
\begin{enumerate}
\item[1.] $(\lambda,\beta)\in P$.

\item[2.] $\displaystyle(X,Y,Z)=\bigg(-2^{n-4}\Big( \frac{\lambda+\beta-1}{\lambda}\Big)^{n-4} t, t, s\bigg)$ where $t,s \in \mathbb{R}, \,t\neq 0$.
\end{enumerate}

In fact, if $n=3$, then  $-2^{n-4}\left( \frac{\lambda+\beta-1}{\lambda}\right)^{n-4}=\left(-2\cdot
 \frac{\lambda+\beta-1}{\lambda}\right)^{-1}$ $= - 2\cdot \frac{\lambda-\beta}{\lambda}$. \end{proof}

\section{Ideals of $\mathcal{B}'(\lambda,\beta)$}

In this section we will find all ideals of $\mathcal{B}'(\lambda,\beta)$. The lattice of ideals will depend on values that parameters $\lambda$ and $\beta$ take.

\begin{Pro}\label{max}
The ideal $\langle a,b, ab\rangle$ is the only maximal ideal of $\mathcal{B}'(\lambda,\beta)$.
\end{Pro}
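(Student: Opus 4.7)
The plan is to prove two things: first, that $I := \langle a,b,ab\rangle$ is maximal; and second, that every proper ideal of $\mathcal{B}'(\lambda,\beta)$ is contained in $I$. Together these give the uniqueness.

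Maximality of $I$ is immediate from the quotient: $\mathcal{B}'(\lambda,\beta)/I$ is one-dimensional with basis $\bar o$ and product $\bar o \circ \bar o = \bar o$, so it is isomorphic to $\mathbb{R}$ as an algebra, hence simple. Thus $I$ is maximal.

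For the containment, suppose towards contradiction that $J$ is a proper ideal with $J \not\subseteq I$. Then $J$ contains an element whose $o$-component is nonzero; after rescaling we may write it as $u = o + xa + yb + z\,ab$. The strategy is to recover $o$ itself inside $J$ by repeatedly multiplying $u$ by $o$ and taking differences. Once $o \in J$ is established, $\lambda a = o\circ a \in J$ and $\lambda b = o\circ b \in J$ force $a,b \in J$ (using $\lambda = 1-\alpha \neq 0$), and then $ab = a\circ b - \frac{\lambda-\beta}{\lambda}a - \frac{\lambda-1+\beta}{\lambda}b \in J$, so $J$ is the whole algebra, contradicting properness.

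The extraction of $o$ rests on the observation that right multiplication by $o$ acts diagonally on the basis $\{a,b,ab\}$ of $I$ with eigenvalues $\lambda,\lambda,0$. Hence $u\circ o = o + \lambda(xa+yb)$ kills the $ab$-component and scales the $a,b$-components by $\lambda$, and a second application gives $(u\circ o)\circ o = o + \lambda^2(xa+yb)$. Taking the differences $u - u\circ o = (1-\lambda)(xa+yb) + z\,ab$ and $u\circ o - (u\circ o)\circ o = \lambda(1-\lambda)(xa+yb)$, both of which lie in $J$, and invoking $\lambda(1-\lambda)\neq 0$ under the standing hypothesis $0<\alpha,\beta<1$, I can peel off $xa+yb \in J$, then $z\,ab \in J$, and finally $o = u - (xa+yb) - z\,ab \in J$. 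The only potential obstacle is bookkeeping; the nondegeneracy of $\lambda(1-\lambda)$ makes the triangular elimination work without any case analysis on $\beta$, so the argument is uniform across the parameter range.
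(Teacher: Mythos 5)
Your proposal is correct and follows essentially the same route as the paper: both arguments exploit the fact that left multiplication by $o$ acts on the basis with eigenvalues $1,\lambda,\lambda,0$ to peel the $o$-component out of any element not lying in $\langle a,b,ab\rangle$, then recover $a$, $b$, and $ab$ to conclude the ideal is improper. The only cosmetic difference is that the paper isolates $x_1 o$ via $\lambda X - X\circ o$ followed by one more multiplication by $o$, while you take successive differences $u-u\circ o$ and $u\circ o-(u\circ o)\circ o$; both hinge on $\lambda(1-\lambda)\neq 0$.
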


\begin{proof}
 Let $X=x_1 o +x_2a+x_3b+x_4ab$ be an element of an ideal $I$ of the algebra $\mathcal{B}'(\lambda, \beta)$.
Then $X\circ o  \in I$ implies $x_1 o +\lambda x_2 a + \lambda x_3 b \in I$.

Considering $\lambda X - X\circ o  \in I$ yields $(\lambda-1)x_1 o  +x_4 ab \in I$.
Multiplying the last one by $o $ we obtain $(\lambda -1)x_1 o \in I$. Since $\lambda\neq 1$ we get $x_1 o  \in I$ and
 therefore, $x_1a=x_1 o  \circ \frac1{\lambda}a\in I$ and $x_1b=x_1 o \circ \frac1{\lambda}b\in I$. If $x_1\neq 0$,
  then $o ,a,b \in I$ and $ab\in I$ which yields $I=\mathcal{B}'(\lambda,\beta)$. Therefore, $X=x_2a+x_3b+x_4 ab$
   and $I\subseteq \langle a,b,ab \rangle$ which is a maximal ideal.
\end{proof}

Let us focus our attention on 2-dimensional ideals.
\begin{Pro}
	The algebra $\mathcal{B}'(\lambda, \beta)$, where $\beta\neq \lambda, \beta\neq 1-\lambda$ does not admit 2-dimensional ideals.
 In otherwise $\langle a,ab\rangle, \, \langle b,ab \rangle  \unlhd \mathcal{B}'(\frac12, \frac12)$,
	$\langle a,ab\rangle \unlhd \mathcal{B}'(\lambda, 1-\lambda)$ and
	$\langle b,ab\rangle \unlhd \mathcal{B}'(\lambda, \lambda)$, where $\lambda\neq \frac12$.
\end{Pro}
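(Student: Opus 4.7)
The plan is to combine Proposition~\ref{max} with a splitting argument. By that proposition every proper ideal $I$ sits inside the maximal ideal $\langle a,b,ab\rangle$; writing a generic element of $I$ as $X = x_2 a + x_3 b + x_4 ab$, the product $X\circ o = \lambda(x_2 a + x_3 b)$ (since $o\circ ab = 0$ and $\lambda\neq 0$) forces both $x_2 a + x_3 b$ and $x_4\,ab$ into $I$. Consequently, for a two-dimensional $I$, either $ab\in I$ or every element has vanishing $ab$-component, so $I$ lies in the subspace spanned by $a$ and $b$. The latter option would force $I$ to equal that whole subspace, but this fails to be an ideal because $a\circ b$ carries a nonzero $ab$-term. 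Thus I may assume $ab\in I$.

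Having made this reduction, I would write $I = \langle xa + yb,\,ab\rangle$ with $(x,y)\neq(0,0)$ and impose $(xa+yb)\circ a\in I$ and $(xa+yb)\circ b\in I$. Projecting each product onto the subspace spanned by $a,b$ and requiring that projection to be proportional to $xa+yb$ yields, under the assumption $xy\neq 0$, the pair
\[x(1-\beta) = y(\beta - \lambda), \qquad x(\lambda+\beta-1) = -\beta y.\]
Eliminating $x/y$ between these collapses to $(\beta-\lambda)(\lambda+\beta-1) + \beta(1-\beta) = 0$, which after expansion reduces to $\lambda(1-\lambda)=0$, contradicting $0<\lambda<1$. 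Hence one of $x,y$ vanishes.

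Finally, I would verify directly that $\langle a, ab\rangle$ is an ideal precisely when the $b$-coefficient $\tfrac{\lambda+\beta-1}{\lambda}$ of $a\circ b$ vanishes, i.e.\ $\beta = 1-\lambda$, and symmetrically that $\langle b, ab\rangle$ is an ideal iff $\beta = \lambda$. The simultaneous condition $\beta=\lambda=1-\lambda$ pins $\lambda=\beta=\tfrac12$ and places both ideals inside $\mathcal{B}'(\tfrac12,\tfrac12)$, which recovers all three subcases. The only nonroutine point is the elimination in the middle paragraph: one must check that in the expansion of $(\beta-\lambda)(\lambda+\beta-1) + \beta(1-\beta)$ all $\beta$-terms cancel, leaving only $\lambda - \lambda^2$, since that single identity is what creates the dichotomy between existence and nonexistence of a two-dimensional ideal.
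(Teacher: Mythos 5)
Your proof is correct, and although it opens with the same two observations as the paper (the maximal-ideal proposition confines $I$ to $\langle a,b,ab\rangle$, and multiplying by $o$ splits any $X\in I$ into $x_2a+x_3b\in I$ and $x_4\,ab\in I$), it then takes a genuinely different and more streamlined route. The paper manufactures further elements of $I$ by multiplying by $a$, $b$ and $a\circ b$ (obtaining $(x_2+x_3)a$, $(x_2+x_3)b$, $(x_2+x_3)ab\in I$, etc.) and then runs a nested case analysis organized from the outset by whether $\lambda=\beta$ or $\lambda=1-\beta$. You instead pin down $I=\langle xa+yb,\,ab\rangle$ and impose the proportionality (rank-one) condition on the $a,b$-parts of $(xa+yb)\circ a$ and $(xa+yb)\circ b$; I verified that these are exactly $y\bigl(x(1-\beta)+y(\lambda-\beta)\bigr)=0$ and $x\bigl(y\beta+x(\lambda+\beta-1)\bigr)=0$, and that when $xy\neq 0$ the elimination does collapse to $(\beta-\lambda)(\lambda+\beta-1)+\beta(1-\beta)=\lambda-\lambda^2=0$, impossible for $0<\lambda<1$. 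This yields the dichotomy $x=0$ or $y=0$ uniformly in $(\lambda,\beta)$, and the conditions $\beta=1-\lambda$, $\beta=\lambda$ emerge only in the final routine check of when $\langle a,ab\rangle$ and $\langle b,ab\rangle$ are actually closed under multiplication by $b$ and $a$ respectively. Your approach buys a single clean polynomial identity in place of the paper's many subcases; the paper's version has the mild advantage of exhibiting explicit ideal elements step by step. One point to make explicit when writing this up: eliminate $x/y$ by cross-multiplying (legitimate since $y\neq 0$) rather than by dividing, so as not to have to treat $\lambda+\beta-1=0$ separately.
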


\begin{proof}
 Let $I$ be a 2-dimensional ideal of the algebra. Proposition~\ref{max} yields that any element $X\in I$ is of the form $X=x_2a+x_3b+x_4ab$.
   From $X-\frac1{\lambda}X\circ o \in I$ we have $x_2a+x_3b, \, x_4 ab \in I$.

The following belongings hold:
\begin{align*}
&x_2^2a -x_3^2 b  =(x_2 a+x_3 b)\circ (x_2a-x_3b) \in I,\\
&(x_2+x_3)a\circ b  =(x_2 a+x_3 b)\circ (a+b) - (x_2 a+x_3 b) \in I,\\
&(x_2+x_3)\frac{\lambda-\beta}{\lambda}a  =
(x_2+x_3)(a\circ b )\circ a-(x_2+x_3)\frac{\lambda-\beta+1}{\lambda}a\circ b\in I,\\
 &(x_2+x_3)\frac{\lambda-\beta+1}{\lambda}b  = (x_2+x_3)(a\circ b )\circ b-(x_2+x_3)\frac{\lambda-\beta}{\lambda}a\circ b\in I,\\
&(x_2+x_3)ab  =(x_2+x_3)a\circ b - (x_2+x_3)\frac{\lambda-\beta}{\lambda}a - (x_2+x_3)\frac{\lambda-\beta+1}{\lambda}b\in I
\end{align*}

In order to complete the description of 2-dimensional ideals we consider distinctive cases.

\begin{enumerate}
	\item[\textbf{Case 1.}] Let $\lambda \neq \beta$ and $\lambda \neq \beta -1$.
	
	The above belongings imply $(x_2+x_3)a$, $(x_2+x_3)b, (x_2+x_3)ab\in I$. Consequently, $x_2+x_3=0$ and $X=x_2(a-b)+x_4ab$.
	
	Consider  $(\lambda-1)b=\lambda(a-b)\circ b -\lambda ab-(\lambda-\beta) (a - b) \in I$.
 Thus $b\in I$  which gives $a\in I$ and we derive into a contradiction. Therefore, in this case there are no 2-dimensional ideals.

	\item[\textbf{Case 2.}] Let $\lambda=\beta$.

	Then $a\circ b = \frac{2\lambda-1}{\lambda}b+ab$. Hence $(2\lambda-1)(x_2+x_3)b= (x_2+x_3)(a\circ b)\circ o \in I$ and we continue by considering the following subcases:
	
	\begin{enumerate}
		\item[\textbf{Case 2.1}] Let $\lambda\neq \frac12$.
		
		Then $(x_2+x_3)b\in I,\, x_2(a-b)=x_2a+x_3b- (x_2+x_3)b\in I$.
		Together with $x_4ab\in I$ we need to have $(x_2+x_3)x_2x_4=0$. Then the following subcases  occur:
		\begin{enumerate}
			\item[\textbf{Case 2.1.1.}] Let $x_4=0$.
			
			Then any element in the ideal is in the form $X=x_2a+x_3b$. But $ab=a\circ b -\frac{2\lambda-1}{\lambda}b\in I$, which is a contradiction.
			
			\item[\textbf{Case 2.1.2.}] Let $x_2=0$.
			
			Then every element in the ideal is in the form $X=x_3b+x_4ab$ and $I=\langle b, ab\rangle$ is an ideal.
			
			\item[\textbf{Case 2.1.3}] Let $x_2+x_3=0$.
			
			Then $a-b, ab \in I$. However $\frac{\lambda-1}{\lambda}b=(a-b)\circ b -ab \in I$ and we obtain $a,b \in I$,  which is a contradiction.
		\end{enumerate}

		\item[\textbf{Case 2.2}] Let $\lambda=\frac12$. Then $\beta=1-\beta=\frac12$ and $a\circ b= ab$.
		
		\begin{enumerate}
			\item[\textbf{Case 2.2.1}] Let $x_2+x_3=0$.
			
			Then $a-b, ab \in I$, while $a=(a-b)\circ a+ab\in I$ and therefore $b\in I$, which is a contradiction.
			
			\item[\textbf{Case 2.2.2}] Let $x_2+x_3\neq 0$.
			
			Then $ab\in I$.
			Note that $x_2(x_2+x_3)a=x_2^2a-x_3^2b+x_3(x_2a+x_3b)\in I$ and similarly $x_3(x_2+x_3)b\in I$. Therefore, $x_2a,\, x_3b\in I$. Since $I$ is 2-dimensional then  $x_2x_3=0$.
			\begin{enumerate}
				\item[\textbf{Case 2.2.2.1}] Let $x_3=0$.
				
				Then $I=\langle a, ab\rangle$ is an ideal.
				
				\item[\textbf{Case 2.2.2.2}] Let $x_2=0$.
				
				Then $\langle b,ab \rangle$ forms an ideal.
			\end{enumerate}
		\end{enumerate}
	\end{enumerate}
	\item[\textbf{Case 3.}] Let $\lambda=1-\beta$.
	
	The study of this case is carried out analogously to the second case and gives same results up to substitution of $a$ to $b$ and vice versa.
\end{enumerate}
\end{proof}

Now we analyze 1-dimensional ideals.

\begin{Pro}
	The ideal $\langle ab \rangle$ is the only 1-dimensional ideal of $ \mathcal{B}'(\lambda,\beta)$.
\end{Pro}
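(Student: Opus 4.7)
The plan is to mirror the technique used for the 2-dimensional case: take an arbitrary nonzero generator of a 1-dimensional ideal $I$, multiply it by $o$ to cut down its shape, and then multiply by $a$ and $b$ to force the remaining coefficients to vanish. By Proposition~\ref{max} every element of a proper ideal has zero $o$-component, so I may write a generator as $X=x_2 a+x_3 b+x_4 ab$, not all coefficients zero.

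The first step is the product with $o$. Using $o\circ a=\lambda a$, $o\circ b=\lambda b$ and the fact that $ab$ is annihilated by every basis element (so $ab\circ o=0$), I get $X\circ o=\lambda x_2 a+\lambda x_3 b$. Since $I=\mathbb{R}X$, there must exist a scalar $c$ with $X\circ o=cX$. Comparing coefficients gives $c x_4=0$ together with $(\lambda-c)x_2=(\lambda-c)x_3=0$. If $x_4\neq 0$, then $c=0$, which forces $x_2=x_3=0$ because $\lambda\neq 0$; thus $X=x_4 ab$ and $I=\langle ab\rangle$, as desired. Otherwise $x_4=0$ and $X=x_2 a+x_3 b$ with $(x_2,x_3)\neq (0,0)$.

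The second step is to rule out this surviving case. Expand $X\circ a=x_2 a+x_3\,a\circ b$; using the formula for $a\circ b$ from the table of $\mathcal{B}'(\lambda,\beta)$, the coefficient of $ab$ in $X\circ a$ is exactly $x_3$. Since $X$ itself has zero $ab$-component, the condition $X\circ a\in\mathbb{R}X$ forces $x_3=0$. By the symmetric computation with $X\circ b$, the $ab$-coefficient equals $x_2$, so $x_2=0$ as well, contradicting $X\neq 0$. Hence no 1-dimensional ideal other than $\langle ab\rangle$ exists, and it is trivial to check that $\langle ab\rangle$ is indeed an ideal since $ab$ is annihilated.

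I do not foresee a genuine obstacle: the fact that $ab$ lies in the annihilator of the algebra makes the single product $X\circ o$ already enough to separate the case $x_4\neq 0$ (which produces $\langle ab\rangle$) from the case $x_4=0$, and the nonzero $ab$-coefficient generated by $a\circ b$ then destroys any candidate ideal that tries to avoid $ab$. The only point requiring a bit of care is treating the subcase $c=0$ (equivalently $X\circ o=0$) in the first step, which is precisely where the ideal $\langle ab\rangle$ emerges.
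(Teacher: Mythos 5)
Your proof is correct, but it takes a genuinely different route from the paper. The paper observes that a generator $x$ of a 1-dimensional ideal must satisfy $x\circ x=\delta x$, so $x$ is either an absolute nilpotent element (giving $\langle ab\rangle$ by Theorem~\ref{absolutely nilpotent}) or, after rescaling, an idempotent; it then appeals to the classification in Theorem~\ref{idempotent} and leaves to the reader the verification that none of the listed idempotents spans an ideal. You instead work directly: after using Proposition~\ref{max} to kill the $o$-component of a generator $X=x_2a+x_3b+x_4ab$, the single product $X\circ o=\lambda x_2a+\lambda x_3b$ together with $X\circ o\in\mathbb{R}X$ splits the analysis into $x_4\neq 0$ (which forces $x_2=x_3=0$ and yields $\langle ab\rangle$) and $x_4=0$; in the latter case the $ab$-coefficients of $X\circ a$ and $X\circ b$, namely $x_3$ and $x_2$, must vanish because $X$ has no $ab$-component, giving a contradiction. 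Your argument is more self-contained and elementary --- it needs only the multiplication table and Proposition~\ref{max}, and in particular avoids the case distinction $(\lambda,\beta)\in P$ versus $(\lambda,\beta)\notin P$ that the idempotent classification carries --- while the paper's reduction is shorter on the page but defers a nontrivial case check to results proved earlier. Both are complete; the only point worth stating explicitly in your write-up is that $\langle ab\rangle$ denotes the linear span of $ab$, which is an ideal precisely because $ab$ lies in the annihilator, as you note.
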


\begin{proof}
 Let $I=\langle x \rangle$ be an ideal of $\mathcal{B}'(\lambda, \beta)$. If $x\circ x =0$ then $x$ is an absolute nilpotent element
  and we know that in this case $I=\langle ab \rangle$ (see Theorem~\ref{absolutely nilpotent}).

If $x\circ x =\delta x$ for a non-zero $\delta$, then denoting $y=\frac1{\delta}x$ yields $y\circ y=y$, i.e.,
 $I$ is generated by an idempotent. The proof of the proposition is completed by checking which idempotents from Theorem~\ref{idempotent} generate a 1-dimensional ideal.
\end{proof}

Summarizing the above results we state the following

\begin{Th}
	The lattices of ideals of corresponding algebras are:
	
	\[\xymatrix{\mathcal{B}'(\lambda, \lambda), \lambda \neq \frac{1}{2}:  & \langle a, b, ab  \rangle \ar@{-}[d] & \mathcal{B}'(\lambda, 1- \lambda),
 \lambda \neq \frac{1}{2}: & \langle a, b, ab  \rangle \ar@{-}[d]\\
		& \langle b, ab  \rangle \ar@{-}[d]& & \langle a, ab  \rangle \ar@{-}[d] \\
		& \langle ab  \rangle &  & \langle ab  \rangle
	}\]
	
	\[\xymatrix{\mathcal{B}'(\lambda, \beta),   \beta \neq \lambda, \beta \neq 1- \lambda:  & \langle a, b, ab  \rangle \ar@{-}[d] &
 \mathcal{B}'(\frac{1}{2}, \frac{1}{2}): & \langle a, b, ab  \rangle \ar@{-}[ld] \ar@{-}[rd]\\
		& \langle  ab \rangle &  \langle a, ab  \rangle \ar@{-}[rd] & & \langle b, ab  \rangle \ar@{-}[ld] \\
		&  &  & \langle ab  \rangle
	}\]
	
\end{Th}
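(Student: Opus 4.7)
The plan is to assemble the three preceding propositions — the uniqueness of $\langle a,b,ab\rangle$ as the only maximal ideal, the case-by-case description of 2-dimensional ideals, and the identification of $\langle ab\rangle$ as the only 1-dimensional ideal — and, in each of the four parameter regions, read off the resulting Hasse diagram.

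First I would enumerate all proper non-zero ideals in each regime by overlaying the three results. In the generic case $\beta\neq\lambda$ and $\beta\neq 1-\lambda$ only $\langle ab\rangle$ and $\langle a,b,ab\rangle$ survive, yielding a chain of length two. When $\lambda=\beta\neq \tfrac12$ the 2-dimensional classification contributes exactly $\langle b,ab\rangle$, so the lattice is $\langle ab\rangle\subset\langle b,ab\rangle\subset\langle a,b,ab\rangle$; the case $\lambda=1-\beta\neq \tfrac12$ is the mirror image obtained by the swap $a\leftrightarrow b$ (matching the isomorphism $\mathcal{B}(\alpha,\beta)\cong\mathcal{B}(\alpha,1-\beta)$ noted earlier in the paper), giving $\langle ab\rangle\subset\langle a,ab\rangle\subset\langle a,b,ab\rangle$. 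In the coincident case $\lambda=\beta=\tfrac12$ both 2-dimensional ideals $\langle a,ab\rangle$ and $\langle b,ab\rangle$ occur simultaneously, sitting between $\langle ab\rangle$ and $\langle a,b,ab\rangle$ and incomparable to one another because neither contains the distinguished generator of the other.

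The inclusions $\langle ab\rangle\subseteq\langle a,ab\rangle$, $\langle ab\rangle\subseteq\langle b,ab\rangle$, and $\langle a,ab\rangle,\langle b,ab\rangle\subseteq\langle a,b,ab\rangle$ needed to draw the diagrams are immediate from the generator sets. The only subtlety — and what I expect to be the main bookkeeping obstacle — is to confirm that each listed set really is stable under multiplication by the whole algebra (and not merely a subalgebra that happened to surface in the ideal search). This is a short check from the multiplication table of $\mathcal{B}'(\lambda,\beta)$: when $\beta=\lambda$ the coefficient $\frac{\lambda-\beta}{\lambda}$ vanishes and $a\circ b=\frac{2\lambda-1}{\lambda}b+ab\in\langle b,ab\rangle$, while $o\circ b=\lambda b$, $b\circ b=b$, and $ab$ annihilates everything; the mirror identity covers $\langle a,ab\rangle$ when $\beta=1-\lambda$, and both closures hold in parallel at $\lambda=\beta=\tfrac12$. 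With these closures in place the four Hasse diagrams follow without further work.
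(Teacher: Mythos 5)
Your proposal is correct and matches the paper exactly: the theorem is stated there as a direct summary of the three preceding propositions (unique maximal ideal $\langle a,b,ab\rangle$, the case-by-case list of 2-dimensional ideals, and $\langle ab\rangle$ as the only 1-dimensional ideal), with the Hasse diagrams read off per parameter regime just as you describe. Your extra verification that each listed subspace is closed under multiplication by the whole algebra is already contained in the paper's 2-dimensional-ideal proposition, so nothing further is needed.
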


\section{Associative Enveloping Algebra of $\mathcal{B'}(\lambda,\beta)$}

For an arbitrary algebra $A$ we can consider its embedding in  the associative algebra $\End (A)$ via left and right actions of $A$ on $A$.

Consider the  operators of left multiplication by basis elements of the algebra $\mathcal{B'}(\lambda,\beta)$. Their matrix forms are the following:

\[l_{o }=\left(\begin{array}{cccc}
1 & 0 & 0 & 0\\
0 &\lambda &0 & 0 \\
0 &0 &\lambda &0 \\
0 &0 &0 &0
\end{array}\right)=E_{11}+\lambda E_{22}+\lambda E_{33},\]
\[l_{a}= \left(\begin{array}{cccc}
0 & 0 & 0 & 0\\
\lambda & 1 & \frac{\lambda - \beta}{\lambda}& 0\\
0 &0 & \frac{\lambda+\beta-1}{\lambda} &0 \\
0 &0 &1 &0 \\
\end{array}\right) =\lambda E_{21}+E_{22}+ \frac{\lambda - \beta}{\lambda}E_{23}+ \frac{\lambda+\beta-1}{\lambda} E_{33}+E_{43},\]
\[l_{b}= \left(\begin{array}{cccc} 0 & 0 & 0 & 0\\ 0&  \frac{\lambda - \beta}{\lambda} &0 &0 \\
\lambda & \frac{\lambda+\beta-1}{\lambda} & 1 &  0\\ 0 &1 &0  &0 \\\end{array}\right)=
 \frac{\lambda - \beta}{\lambda} E_{22}+\lambda E_{31}+\frac{\lambda+\beta-1}{\lambda} E_{32}+E_{33}+E_{42}.\]
\[l_{ab}=O _4.\]

Let $\mathcal{A}$  be the associative subalgebra of the algebra $\End\big(\mathcal{B'}(\lambda,\beta)\big)$ with the generating set $\{l_o,l_a, l_b \}$.

We denote some subalgebras of the matrix algebra $M_4(\mathbb{R})$ as follows:
\[M_0=\left\langle
\begin{pmatrix}
* & 0 & 0 & 0\\
* & *& 0& 0\\
* & 0& *& 0\\
* &* & *& 0\\
\end{pmatrix}
\right\rangle, \qquad
M_1=\left\langle
\begin{pmatrix}
* & 0 & 0 & 0\\
* & *& *& 0\\
* & 0& *& 0\\
* &* & *& 0\\
\end{pmatrix}
\right\rangle,\]
\[M_2=\left\langle
\begin{pmatrix}
* & 0 & 0 & 0\\
* & *& 0& 0\\
* & *& *& 0\\
* &* & *& 0\\
\end{pmatrix}
\right\rangle, \qquad
M_3=\left\langle
\begin{pmatrix}
	* & 0 & 0 & 0\\
	* & *& *& 0\\
	* & *& *& 0\\
	* &* & *& 0\\
\end{pmatrix}
\right\rangle.\]

Note that the  generators of $\mathcal{A}$ are contained in $M_3$. Below,
 we establish a result on the associative enveloping algebra of $\mathcal{A}$ depending on values of $\lambda$ and $\beta$.

\begin{Th}
	Let $\mathcal{A}$ be the associative subalgebra of $\End\big(\mathcal{B'}(\lambda ,\beta)\big)$ with generators $\{l_o ,l_a, l_b \}$. Then the following statements hold:
	\begin{enumerate}
		\item[1.] If $\lambda = \beta = \frac{1}{2}$, then $\mathcal{A} = M_0$.
		\item[2.] If $\lambda \neq \beta, \lambda = 1 - \beta$, then $\mathcal{A} = M_1$.
		\item[3.] If $\lambda = \beta, \lambda \neq 1 - \beta$, then $\mathcal{A} = M_2$.
		\item[4.] If $\lambda \neq \beta, \lambda \neq 1 - \beta$, then $\mathcal{A} = M_3$.
	\end{enumerate}
\end{Th}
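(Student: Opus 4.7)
The plan is to show each $M_i$ is an associative subalgebra of $M_4(\mathbb{R})$, to obtain $\mathcal{A} \subseteq M_i$ by inspecting the nonzero entries of the generators, and then to prove the reverse inclusion by exhibiting a spanning set of matrix units inside $\mathcal{A}$. That each of the four patterns $M_0, M_1, M_2, M_3$ is closed under multiplication is a direct verification: if $(i,k)$ and $(k,j)$ are both allowed-nonzero positions of the pattern, then so is $(i,j)$. Writing $c_1 := (\lambda-\beta)/\lambda$ for the $(2,3)$-entry of $l_a$ (and $(2,2)$-entry of $l_b$) and $c_2 := (\lambda+\beta-1)/\lambda$ for the $(3,3)$-entry of $l_a$ (and $(3,2)$-entry of $l_b$), the four cases correspond exactly to whether $c_1$ and $c_2$ vanish, and one sees by inspection that $l_o$, $l_a$, $l_b$ all lie in the $M_i$ claimed in each case.

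For the reverse inclusion, I first extract seven matrix units from $\mathcal{A}$ in a case-independent way. Since $l_o = \operatorname{diag}(1,\lambda,\lambda,0)$ has distinct eigenvalues and $\lambda \in (0,1)$, the element $(l_o^2 - \lambda l_o)/(1-\lambda) = E_{11}$ lies in $\mathcal{A}$, and hence so does $Q := (l_o - E_{11})/\lambda = E_{22}+E_{33}$. The identities $l_a E_{11} = \lambda E_{21}$ and $l_b E_{11} = \lambda E_{31}$ place $E_{21}, E_{31}$ in $\mathcal{A}$. Setting $M := l_a - \lambda E_{21}$ and $N := l_b - \lambda E_{31}$, the row-$4$ components separate via $M - QM = E_{43}$ and $N - QN = E_{42}$, and finally $E_{41} = E_{42}\, E_{21}$. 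These seven units together with the linear combinations $QM = E_{22} + c_1 E_{23} + c_2 E_{33}$ and $QN = c_1 E_{22} + c_2 E_{32} + E_{33}$ all belong to $\mathcal{A}$, so the remaining question is which elements of the four-dimensional subspace $V := \spa(E_{22}, E_{23}, E_{32}, E_{33})$ are in $\mathcal{A}$.

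In Case 1 ($c_1 = c_2 = 0$) one has $QM = E_{22}$ and $QN = E_{33}$ directly, and since $M_0$ forbids the positions $(2,3)$ and $(3,2)$ this completes $\mathcal{A} = M_0$. In Case 2 ($c_1 \neq 0 = c_2$), the product $QM \cdot QN = c_1(E_{22} + E_{23})$ combined with $QM$ produces $(c_1 - 1) E_{23}$, hence $E_{23}$, since $c_1 \neq 1$ follows from $\beta \neq 0$; then $E_{22}$ and $E_{33}$ follow by subtraction, yielding $\mathcal{A} = M_1$. Case 3 is handled by symmetry under the involution $a \leftrightarrow b$. In Case 4 ($c_1, c_2 \neq 0$) all four basis vectors of $V$ are required, and I produce them using further products such as $QM\cdot QN$, $QN\cdot QM$, $l_a \cdot QN$, and $l_b \cdot QM$, then argue that the resulting four vectors in $V$, expressed over the basis $(E_{22}, E_{23}, E_{32}, E_{33})$, have a coefficient determinant which is a nonzero rational function of $c_1, c_2$ on the admissible parameter range; thus $V \subseteq \mathcal{A}$ and $\mathcal{A} = M_3$.

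The main obstacle I anticipate is the bookkeeping in Case 4: selecting a suitable family of products and then verifying by a direct computation that the associated $4 \times 4$ coefficient matrix has nonzero determinant whenever $c_1, c_2 \neq 0$ and $0 < \alpha, \beta < 1$. The other three cases, though still requiring explicit multiplications, ultimately reduce to scalar non-degeneracy conditions of the form $c_i \neq 0, 1$, all of which are guaranteed by the standing hypothesis $0 < \alpha, \beta < 1$.
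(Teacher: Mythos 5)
Your proposal is correct and follows the same basic mechanism as the paper: extract the idempotents $E_{11}$ and $E_{22}+E_{33}$ from $l_o$, use them to peel matrix units off $l_a$ and $l_b$ (obtaining $E_{21},E_{31},E_{41},E_{42},E_{43}$), observe that each $M_i$ is a pattern subalgebra containing the generators to get $\mathcal{A}\subseteq M_i$, and then analyse the middle $2\times 2$ block according to whether $c_1=\frac{\lambda-\beta}{\lambda}$ and $c_2=\frac{\lambda+\beta-1}{\lambda}$ vanish. The one genuine divergence is how the diagonal units $E_{22},E_{33}$ are obtained: the paper produces $E_{33}$ uniformly in all four cases from the single identity $\bigl(c_2E_{32}+\tfrac{\beta}{\lambda}E_{33}\bigr)\bigl(c_1E_{23}+\tfrac{\beta-1}{\lambda}E_{33}\bigr)=\tfrac{\lambda-1}{\lambda}E_{33}$, whose cross terms cancel for every admissible $(\lambda,\beta)$; this settles $M_0\subseteq\mathcal{A}$ once and for all, so that the case split only concerns $E_{23}$ and $E_{32}$ (available as $c_1E_{23}$ and $c_2E_{32}$, hence exactly when $c_1\neq 0$, resp.\ $c_2\neq 0$). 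You instead split into cases immediately and, in Case 4, defer a determinant computation. That computation does succeed: with your four elements $QM$, $QN$, $QM\cdot QN$, $QN\cdot QM$, the coefficient matrix over the basis $(E_{22},E_{23},E_{32},E_{33})$ has rows $(1,c_1,0,c_2)$, $(c_1,0,c_2,1)$, $(c_1(1+c_2),c_1,c_2^2,c_2)$, $(c_1,c_1^2,c_2,c_2(1+c_1))$, with determinant $-c_1c_2(c_1+c_2-1)=-c_1c_2\cdot\tfrac{\lambda-1}{\lambda}$, which is nonzero exactly when $c_1c_2\neq 0$ since $\lambda=1-\alpha\neq 1$; so your spanning argument closes Case 4, and your Cases 1--3 (including the nondegeneracy conditions $c_1\neq 1$, $c_2\neq 1$ from $0<\beta<1$, and the $a\leftrightarrow b$ symmetry exchanging $c_1$ and $c_2$) are all sound. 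The paper's uniform identity buys a shorter case analysis; your route makes the dependence on $c_1,c_2$ and the symmetry between Cases 2 and 3 more transparent at the cost of one explicit $4\times 4$ determinant.
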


\begin{proof}
 Since $\lambda(1-\lambda) (E_{22}+E_{33})=l_o -l_o ^2 \in \mathcal{A}$,
${E_{11}}=l_o -\lambda (E_{22}+E_{33})\in \mathcal{A}$
and $\lambda E_{21}=l_a-l_a \cdot (E_{22}+E_{33})\in \mathcal{A}$, these give ${E_{22}+E_{33}}, E_{11},  E_{21}\in \mathcal{A}$.

Consider $x_1:=\frac{\lambda-\beta}{\lambda}E_{23}+\frac{\beta-1}{\lambda}E_{33}+E_{43}=l_a\cdot(E_{22}+E_{33})-(E_{22}+E_{33})\in \mathcal{A}$.
 Then $x_2:=\frac{\lambda-\beta}{\lambda}E_{23}+\frac{\beta-1}{\lambda}E_{33}=(E_{22}+E_{33})\cdot(\frac{\lambda-\beta}{\lambda}E_{23}
 +\frac{\beta-1}{\lambda}E_{33}+E_{43})\in \mathcal{A}$ and we obtain  ${E_{43}}=x_1-x_2 \in \mathcal{A}$.

Consider $x_3:=(l_b-\frac{\lambda-\beta}{\lambda}(E_{22}+E_{33}))\cdot (E_{22}+E_{33}) \in \mathcal{A}$
 and $\lambda E_{31}= l_b-\frac{\lambda-\beta}{\lambda}(E_{22}+E_{33})-x_3\in\mathcal{A}$.
 So we obtain ${E_{31}}\in \mathcal{A}$ and ${E_{41}}=E_{43}\cdot E_{31}\in \mathcal{A}$.

Moreover, ${E_{42}}=x_3- (E_{22}+E_{33})x_3\in \mathcal{A}$.

We have $ E_{33}=\frac{\lambda}{\lambda-1}(x_3-E_{42})\cdot x_2
\in \mathcal{A}$
 and $E_{22}=(E_{22}+E_{33})-E_{33}\in \mathcal{A}$.

Summarizing, we obtain $M_0 \subseteq \mathcal{A}$.

Furthermore, from $x_2$ and $x_3-E_{42}$, we obtain $\frac{\lambda-\beta}{\lambda}E_{23}\in \mathcal{A}$ and $\frac{\lambda+\beta-1}{\lambda}E_{32}\in\mathcal{A}$.
 Thus the following cases occur:
\begin{enumerate}
\item[\textbf{Case 1.}] Let $\lambda\neq \beta$ and $\lambda\neq 1-\beta$.

Then $E_{23}, E_{32}\in \mathcal{A}$ and $\mathcal{A}= M_3$.

\item[\textbf{Case 2.}] Let $\lambda= \beta$ and $\lambda\neq 1-\beta$.

Then we obtain $E_{32}\in \mathcal{A}$ and $ M_2 \subseteq \mathcal{A}$. However, $l_o,l_a,l_b \in M_2$ and therefore $\mathcal{A}= M_2$.

\item[\textbf{Case 3.}] Let $\lambda\neq \beta$ and $\lambda=1-\beta$.

Then we have $E_{23}\in \mathcal{A}$ and $ M_1 \subseteq \mathcal{A}$. Moreover, $l_o,l_a,l_b \in M_1$ and $\mathcal{A}= M_1$.

\item[\textbf{Case 4.}] Let $\lambda=\beta= 1-\beta=\frac12$.

Then we get $l_o,l_a,l_b \in M_0$ and $\mathcal{A}= M_0$.
\end{enumerate}
\end{proof}

\section{Isomorphisms of ABO-type blood  algebras}

In this section we analyze the conditions under which two  algebras  $\mathcal{B'}(\lambda ',\beta')$ and $\mathcal{B'}(\lambda ,\beta)$
with corresponding basis $\{o ',a',b',ab'\}$ and $\{o ,a,b,ab\} $ are isomorphic.

Let the isomorphism $\varphi \colon \mathcal{B'}(\lambda ',\beta')\to  \mathcal{B'}(\lambda ,\beta) $ be given by
\[\varphi(o ')=d_{11}o +d_{12}a+d_{13}b+d_{14}ab\]
\[\varphi(a')=d_{21}o +d_{22}a+d_{23}b+d_{24}ab\]
\[\varphi(b')=d_{31}o +d_{32}a+d_{33}b+d_{34}ab\]
\[\varphi(ab')=d_{41}o +d_{42}a+d_{43}b+d_{44}ab.\]

Since $\langle ab' \rangle =\Ann(\mathcal{B'}(\lambda ',\beta'))$ it is clear that $\varphi(ab')\in \Ann(\mathcal{B'}(\lambda ,\beta))$.

Therefore, \[0=\varphi(ab')\circ o = (d_{41}o +d_{42}a+d_{43}b+d_{44}ab)\circ o  =d_{41}o +\lambda d_{42}a+\lambda d_{43}b\]
and since $\lambda\neq 0$, we obtain $d_{41}=d_{42}=d_{43}=0$.

Considering the equalities \[\varphi(o ')\circ \varphi(o ')= \varphi(o '\circ o '), \, \varphi(a')\circ \varphi(a')=\varphi(a'\circ a'), \,\varphi(b')\circ\varphi(b')=\varphi(b'\circ b'),\]
\[ \varphi(o')\circ \varphi(a')= \varphi(o '\circ a'),\, \varphi(o ') \circ \varphi (b') =
\varphi(o '\circ b'),\, \varphi(a')\circ\varphi(b')=\varphi(a'\circ b'),\] and comparing the coefficients at
the corresponding basis elements $\{ o,a,b,ab\}$ we derive the system of equations
\[ (I):\left\{\begin{array}{rl}
d_{11}=& d_{11}^2\\
d_{12}=& d_{12}^2+2d_{11}d_{12}\lambda +2d_{12}d_{13} \frac{\lambda-\beta}{\lambda}\\
d_{13}=& d_{13}^2+2d_{11}d_{13}\lambda +2d_{12}d_{13} \frac{\lambda+\beta-1}{\lambda}\\
d_{14}=& 2d_{12}d_{13}
\end{array}
\right.\]

\[ (II):\left\{\begin{array}{rl}
d_{21}=& d_{21}^2\\
d_{22}=& d_{22}^2+2d_{21}d_{22}\lambda +2d_{22}d_{23}   \frac{\lambda-\beta}{\lambda}\\
d_{23}=& d_{23}^2+2d_{21}d_{23}\lambda +2d_{22}d_{23} \frac{\lambda+\beta-1}{\lambda}\\
d_{24}=& 2d_{22}d_{23}
\end{array}
\right.\]
\[(III): \left\{\begin{array}{rl}
d_{31}=& d_{31}^2\\
d_{32}=& d_{32}^2+2d_{31}d_{32}\lambda +2d_{32}d_{33}  \frac{\lambda-\beta}{\lambda}\\
d_{33}=& d_{33}^2+2d_{31}d_{33}\lambda +2d_{32}d_{33} \frac{\lambda+\beta-1}{\lambda}\\
d_{34}=& 2d_{32}d_{33}
\end{array}
\right.\]
\[ (IV):\left\{\begin{array}{rl}
\lambda'd_{21}=& d_{11}d_{21}\\
\lambda'd_{22}=& d_{12}d_{22} + (d_{11}d_{22}+d_{12}d_{21})\lambda+ (d_{12}d_{23}+d_{13}d_{22}) \frac{\lambda-\beta}{\lambda} \\
\lambda'd_{23}=&d_{13}d_{23} +(d_{11}d_{23}+d_{13}d_{21})\lambda+ (d_{12}d_{23}+d_{13}d_{22}) \frac{\lambda+\beta-1}{\lambda}\\
\lambda'd_{24}=& d_{12}d_{23}+d_{13}d_{22}
\end{array}
\right.\]
\[ (V):\left\{\begin{array}{rl}
\lambda'd_{31}=& d_{11}d_{31}\\
\lambda'd_{32}=& d_{12}d_{32} + (d_{11}d_{32}+d_{12}d_{31})\lambda+ (d_{12}d_{33}+d_{13}d_{32})  \frac{\lambda-\beta}{\lambda} \\
\lambda'd_{33}=&d_{13}d_{33} +(d_{11}d_{33}+d_{13}d_{31})\lambda+ (d_{12}d_{33}+d_{13}d_{32}) \frac{\lambda+\beta-1}{\lambda}\\
\lambda'd_{34}=& d_{12}d_{33}+d_{13}d_{32}
\end{array}
\right.\]
\[(VI):\left\{\begin{array}{ll}
\frac{\lambda'-\beta'}{\lambda'}d_{21}+\frac{\lambda'+\beta'-1}{\lambda'}d_{31}&= d_{21}d_{31} \\
\frac{\lambda'-\beta'}{\lambda'}d_{22}+\frac{\lambda'+\beta'-1}{\lambda'}d_{32}&= d_{22}d_{32} +
 (d_{21}d_{32}+d_{22}d_{31})\lambda+ (d_{22}d_{33}+d_{23}d_{32}) \frac{\lambda-\beta}{\lambda}  \\
\frac{\lambda'-\beta'}{\lambda'}d_{23}+\frac{\lambda'+\beta'-1}{\lambda'}d_{33}& =  d_{23}d_{33}
 +(d_{21}d_{33}+d_{23}d_{31})\lambda+ (d_{22}d_{33}+d_{23}d_{32}) \frac{\lambda+\beta-1}{\lambda}\\
  \frac{\lambda'-\beta'}{\lambda'}d_{24}+\frac{\lambda'+\beta'-1}{\lambda'}d_{34}+d_{44} &=  d_{22}d_{33}+d_{23}d_{32}.
\end{array}
\right.\]

From the first equation of system (I) we obtain $d_{11}=0$ or $d_{11}=1$.

If $d_{11}=0$ then first equations of (IV) and (V) yield $d_{21}=d_{31}=0$, which together with $d_{41}=0$ gives a contradiction for $\varphi$ being an isomorphism.

Therefore, $d_{11}=1$ and from the same equations we obtain $d_{21}=d_{31}=0$.

Let us denote $\Delta = d_{22}d_{33}-d_{23}d_{32}$. Note that due to the results above $\det [\varphi]=  d_{44}\cdot \Delta\neq 0$.

Multiplying equations (IV.2), (V.2) by $d_{33},d_{23}$, respectively, and subtracting,
 we obtain $\lambda'\Delta=d_{12}\Delta+\lambda\Delta+d_{13}  \frac{\lambda-\beta}{\lambda} \Delta$. Since $\Delta \neq 0$ we obtain
\[\lambda'-\lambda=d_{12}+d_{13}  \frac{\lambda-\beta}{\lambda}.\]

Analogously, multiplying equations (IV.3), (V.3) by $d_{32}, d_{22}$ respectively, and subtracting from the second one the first one, we get
\[\lambda'-\lambda=d_{13}+d_{12}  \frac{\lambda+\beta-1}{\lambda}.\]

Hence, $d_{13}=\frac{1-\beta}{\beta}d_{12}$.

 \begin{enumerate}
\item[\textbf{Case 1.}] Let $d_{12}\neq 0$.

Then $d_{13}\neq 0$. Note that equations (I.2) and (I.3) transform in the system of equations \eqref{E:ec1} for the value $\alpha=1$.
\[\left\{\begin{array}{rl}
d_{12}+ 2 \frac{\lambda-\beta}{\lambda}d_{13}= & 1-2\lambda\\
2 \frac{\lambda+\beta-1}{\lambda}d_{12}+d_{13} =& 1-2\lambda
\end{array} \right. .\]

It is known that this system does not have a solution if the determinant of the system is equal to zero.
 Therefore, assuming that $(\lambda, \beta)\not \in P$ (that is, the determinant is not zero) we obtain the solution
\[d_{12}= \frac{\lambda (1-2  \lambda)}{-3\lambda^2+4\beta^2+4\lambda-4\beta}  (2\beta-\lambda),\]
\[ d_{13}= \frac{\lambda (1-2  \lambda)}{-3\lambda^2+4\beta^2+4\lambda-4\beta} (2-2\beta-\lambda).\]
 Taking into account $d_{13}=\frac{1-\beta}{\beta}d_{12}$, we obtain  $\beta=\frac12$. Hence, $d_{12}=d_{13}=\frac{\lambda(1-2\lambda)}{1-3\lambda}$.

Subtracting from (IV.2) the equation (IV.3) and taking into account that $d_{12}=d_{13}, \beta=\frac12$, we obtain

\[\lambda'(d_{22}-d_{23})=d_{12}(d_{22}-d_{23})+\lambda(d_{22}-d_{23}).\]

Similarly, subtracting from (V.2) equation (V.3), we obtain \[\lambda'(d_{32}-d_{33})=d_{12}(d_{32}-d_{33})+\lambda(d_{32}-d_{33}).\]

Observe that both values $d_{22}-d_{23}$ and $d_{32}-d_{33}$ can not be simultaneously zero, since it contradicts to $\Delta =0$.
 Therefore, at least one of these values is non-zero. Then we obtain $\lambda'-\lambda=d_{12}$ and $\lambda'-\lambda=d_{12} (1+\frac{\lambda-\beta}{\lambda})$.
  Hence $\lambda=\beta=\frac{1}{2}$. However it implies $d_{12}=0$, which is a contradiction.

\item[\textbf{Case 2.}] Let $d_{12}=0$.

Then $d_{13}=0$ and $\lambda'=\lambda$. From (I.4) we obtain $d_{14}=0$.
Furthermore, (IV.4) and (V.4) yield $d_{24}=d_{34}=0$. Equations (II.4) and (III.4) lead to $d_{22}d_{23}=d_{32}d_{33}=0$.
 Together with condition $\Delta \neq 0$ we have the following possible subcases:

\begin{enumerate}
\item[\textbf{Case 2.1.}] Let $d_{23}=d_{32}=0$ and $ d_{22}\cdot d_{33}\neq 0$.

Systems (II) and (III) yield $d_{22}=d_{33}=1$.
Substituting these values into (VI.2), we obtain  $\frac{\lambda'-\beta'}{\lambda'}=\frac{\lambda-\beta}{\lambda}$, which implies $\beta'=\beta$.

\item[\textbf{Case 2.2.}] Let $d_{22}=d_{33}=0$ and $ d_{23}\cdot d_{32}\neq 0$.

Then from (II.3) and (III.2) we get $d_{23}=d_{32}=1$.
Substituting the obtained values into the system (VI), we derive  $\beta'=1-\beta$.
\end{enumerate}
Hence, $\mathcal{B'}(\lambda , \beta)\cong \mathcal{B'}(\lambda , 1-\beta)$ via the change of basis $(o ,a,b,ab)\mapsto (o ,b,a,ab)$.
\end{enumerate}

We conclude summarizing the results in the following

\begin{Th}
	Two distinct ABO-blood type algebras  $\mathcal{B'}(\lambda , \beta)$ and $\mathcal{B'}(\lambda ', \beta')$ are isomorphic if and only if $\lambda'=\lambda$ and $\beta'=1-\beta$.
\end{Th}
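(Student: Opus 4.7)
The plan is to fix an arbitrary isomorphism $\varphi\colon \mathcal{B'}(\lambda',\beta') \to \mathcal{B'}(\lambda,\beta)$, record its matrix $[d_{ij}]$ in the given bases, and read off the constraints by imposing multiplicativity. Sufficiency is immediate: the linear map $(o,a,b,ab) \mapsto (o,b,a,ab)$ realizes $\mathcal{B'}(\lambda,\beta) \cong \mathcal{B'}(\lambda, 1-\beta)$ directly from the multiplication table. The necessity direction is where the work lies.

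First I would exploit the annihilator. Since $\langle ab'\rangle = \Ann(\mathcal{B'}(\lambda',\beta'))$, its image must lie in $\Ann(\mathcal{B'}(\lambda,\beta)) = \langle ab\rangle$; expanding $\varphi(ab') \circ o$ and using $\lambda \neq 0$ forces $d_{41} = d_{42} = d_{43} = 0$. Then write the six systems (I)--(VI) obtained by matching coefficients in $\varphi(x \circ y) = \varphi(x) \circ \varphi(y)$ for $(x,y)$ ranging over the pairs of basis elements. The first equation of (I) yields $d_{11} \in \{0,1\}$; the value $d_{11}=0$ forces $d_{21}=d_{31}=0$ via the leading equations of (IV), (V), contradicting invertibility, so $d_{11}=1$ and $d_{21}=d_{31}=0$.

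The key step is to set $\Delta = d_{22}d_{33} - d_{23}d_{32}$ (nonzero since $\det[\varphi] = d_{44}\Delta$), take the combinations $d_{33}\cdot(\text{IV.2}) - d_{23}\cdot(\text{V.2})$ and $d_{22}\cdot(\text{V.3}) - d_{32}\cdot(\text{IV.3})$, and divide by $\Delta$ to obtain
\[
\lambda' - \lambda \;=\; d_{12} + d_{13}\cdot\tfrac{\lambda - \beta}{\lambda} \;=\; d_{13} + d_{12}\cdot\tfrac{\lambda + \beta - 1}{\lambda},
\]
forcing $d_{13} = \frac{1-\beta}{\beta}\, d_{12}$. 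Now split on $d_{12}$. If $d_{12} \neq 0$, equations (I.2)--(I.3) become the system \eqref{E:ec1} at $\alpha=1$; for $(\lambda,\beta) \notin P$ the unique solution combined with $d_{13} = \frac{1-\beta}{\beta}d_{12}$ forces $\beta = \tfrac{1}{2}$ and $d_{12}=d_{13}$; substituting into (IV.2)$-$(IV.3), (V.2)$-$(V.3) and using that $(d_{22}-d_{23},\,d_{32}-d_{33}) \neq (0,0)$ (else $\Delta = 0$) yields $\lambda = \beta = \tfrac{1}{2}$, hence $d_{12}=0$, a contradiction. The singular sub-case $(\lambda,\beta) \in P$ is handled by attacking (I.2)--(I.3) directly and reaching the same contradiction.

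In the remaining case $d_{12} = 0$, one has $d_{13}=0$ and $\lambda'=\lambda$; the last equations of (I), (IV), (V) then give $d_{14}=d_{24}=d_{34}=0$, while (II.4) and (III.4) yield $d_{22}d_{23} = d_{32}d_{33} = 0$. The invertibility condition $\Delta \neq 0$ leaves two patterns: either $d_{23}=d_{32}=0$ with $d_{22}=d_{33}=1$, making (VI.2) give $\beta' = \beta$ (so the algebras coincide and $\varphi$ is not \emph{distinct}); or $d_{22}=d_{33}=0$ with $d_{23}=d_{32}=1$, making the system (VI) give $\beta' = 1-\beta$, the unique nontrivial case. The main obstacle I anticipate is the bookkeeping in the $d_{12}\neq 0$ branch, where the singular linear system \eqref{E:ec1}, the symmetry relation $d_{13}=\tfrac{1-\beta}{\beta}d_{12}$, and the quadratic equations (IV)--(V) must be orchestrated carefully, plus the separate treatment of $(\lambda, \beta) \in P$, to force the final contradiction.
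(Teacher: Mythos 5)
Your proposal is correct and follows essentially the same route as the paper's own proof: the annihilator argument killing $d_{41},d_{42},d_{43}$, the dichotomy $d_{11}\in\{0,1\}$, the determinant $\Delta=d_{22}d_{33}-d_{23}d_{32}$ and the two linear combinations yielding $d_{13}=\frac{1-\beta}{\beta}d_{12}$, the contradiction in the branch $d_{12}\neq 0$ via the system \eqref{E:ec1} at $\alpha=1$, and the final split into $\beta'=\beta$ versus $\beta'=1-\beta$ in the branch $d_{12}=0$. No substantive differences from the paper's argument.
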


\section*{Acknowledgments}

The  authors were supported by Ministerio de Econom\'ia y Competitividad (Spain), grant MTM2013-43687-P (European FEDER support included).
The second and fourth  authors were also supported by Xunta de Galicia,
grant GRC2013-045 (European FEDER support included). The third named author was also supported by the Grant No.0251/GF3 of Education and Science Ministry of Republic of Kazakhstan.

\end{document}